\documentclass[a4paper,10pt,article]{amsart}

\usepackage{amsmath, amscd,amssymb,amsthm,CJKnumb,verbatim,indentfirst,dsfont,mathrsfs,ipa,graphicx,textcomp,enumerate}
\usepackage[all]{xy}
\usepackage{CJK,ipa,amsmath,booktabs,longtable}
\usepackage[numbers]{natbib}
\usepackage[colorlinks=true,
            linkcolor=blue,
            anchorcolor=blue,
            citecolor=blue]{hyperref}
\textwidth=125 mm
\textheight=195 mm


\newtheorem{theorem}{Theorem}
\newtheorem{definition}[theorem]{Definition}
\newtheorem{remark}[theorem]{Remark}
\newtheorem{lemma}[theorem]{Lemma}
\newtheorem{corollary}[theorem]{Corollary}
\newtheorem{proposition}[theorem]{Proposition}

\newtheorem{example}[theorem]{Example}
\newtheorem{question}[theorem]{Question}


\title{$B^p_r(F_n)$ has no nontrivial idempotents}

\author{Yifan Liu}
\address{Research Center for Operator Algebras, School of Mathematical Sciences, East China Normal University, Shanghai 200062, China}
\email{fenix6b1s3@163.com}

\author{Jianguo Zhang}
\address{Research Center for Operator Algebras, School of Mathematical Sciences, East China Normal University, Shanghai 200062, China}
\email{jgzhang@math.ecnu.edu.cn}

\date{}

\begin{document}

\maketitle

\begin{abstract}
    We show that there is no nontrivial idempotent in the reduced group $\ell^p$-operator algebra $B^p_r(F_n)$ of the free group $F_n$ on $n$ generators for each positive integer $n$.
\end{abstract}

\pagestyle{plain}
\section{Introduction}

    For a unital Banach algebra $A$, an \textit{idempotent} in $A$ is an element $a$ with $a^2=a$. Obviously, both the zero element $0$ and the unit element $I$ are idempotents in $A$. Besides, an idempotent $a$ in $A$ is called to be $\textit{nontrivial idempotent}$, if $a$ is neither $0$ nor $I$. Around 1949, Kadison and Kaplansky conjectured that if $\Gamma$ is a torsion free discrete group, then the reduced group $C^{\ast}$-algebra $C^{\ast}_r(\Gamma)$ has no nontrivial idempotents (or projections, equivalently). Through the decades, people have made great achievements around the conjecture, but whether it is true is still unknown. 

    An important way to investigate the Kadison-Kaplansky conjecture is through the Baum-Connes conjecture, namely, if $\Gamma$ is a torsion free discrete group that satisfies the Baum-Connes conjecture (actually, surjectivity of the assembly map is sufficient), then $C^{\ast}_r(\Gamma)$ has no nontrivial idempotents (cf. \cite{BCH1}). This includes a large class of groups. For example, Higson and Kasparov (cf. \cite{HK1}) showed that the Baum-Connes conjecture is true for a-T-menable groups, which include amenable groups and free groups, Lafforgue (cf. \cite{Lafforgue2002}) and Mineyev, Yu (cf. \cite{MineyevYu}) proved that the Baum-Connes conjecture holds for hyperbolic groups. Hence if $\Gamma$ is torsion free, discrete a-T-menable groups or hyperbolic groups, then $C^{\ast}_r(\Gamma)$ contains no nontrivial idempotents. For hyperbolic groups, we note that there is another way to study the Kadison-Kaplansky conjecture, which is due to Puschnigg by using of local cyclic homology (cf. \cite{Puschnigg2002}).

    As people are more interested in free groups, the stories of the case when $\Gamma$ is a free group $F_n$ could be even longer. It was first shown by Pimsner and Voiculescu (cf. \cite{PV1}) that $C^{\ast}_r(F_n)$ is without nontrivial idempotents, and then by others (cf. \cite{Connes1}, \cite{CF1}). 
    
    For a discrete group $\Gamma$, if we consider the left regular representation of $\Gamma$ on $\ell^p$-space $\ell^p(\Gamma)$, then we can define the reduced group $\ell^p$-operator algebra $B^p_r(\Gamma)$ for $p\in [1,\infty]$ (cf. Definition \ref{deflpalg}). And if $\Gamma$ is the free group $F_n$, Phillips raised a question concerning on the existence of nontrivial idempotents in $B^p_r(F_n)$ (cf. \cite{Phillips1}):

\begin{question}(\cite[part of problem 9.3]{Phillips1})\label{Phillipsquestion}
    For $n\in\{2,3,\dots\}$ and $p\in[1,\infty)$, does $B^p_r(F_n)$ have nontrivial idempotents?
\end{question}

    In this paper we give an answer to the above question (cf. Example \ref{answertoPhillips}) by using of property $(RD)_q$ of groups (cf. Definition \ref{defRD}) introduced by Liao and Yu (cf. \cite{LY1}). The main results of this paper are as follows:
\begin{theorem}(cf. Theorem \ref{maintheorem})
    Let $p\in [1,\infty]$, $q$ be its dual number and $\Gamma$ be a discrete group. Assume $\Gamma$ has property $(RD)_q$, if $C^{\ast}_r(\Gamma)$ has no nontrivial idempotents, then both $B^p_r(\Gamma)$ and $B^q_r(\Gamma)$ also have no nontrivial idempotents.
\end{theorem}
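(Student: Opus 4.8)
The plan is to manufacture a single dense subalgebra of rapidly decreasing functions that sits, holomorphically closedly, inside all three of $B^{p}_{r}(\Gamma)$, $B^{q}_{r}(\Gamma)$ and $C^{\ast}_{r}(\Gamma)$, and then to transfer idempotents. Fix a length function $\ell$ witnessing property $(RD)_q$ (Definition \ref{defRD}) and let $\mathcal{S}$ be the associated $\ell^{q}$-Schwartz algebra: the Fréchet space of $f\colon\Gamma\to\mathbb{C}$ with $\|f\|_{s}:=\bigl(\sum_{g}|f(g)|^{q}(1+\ell(g))^{sq}\bigr)^{1/q}<\infty$ for every $s\ge 0$, topologised by the seminorms $\|\cdot\|_{s}$. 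The first task is to record the consequences of property $(RD)_q$, in the spirit of Jolissaint's work for $q=2$ and its $\ell^{q}$-analogue in \cite{LY1}: (i) $\mathcal{S}$ is a unital Fréchet algebra under convolution containing $\mathbb{C}\Gamma$ densely; (ii) there exist $C>0$ and $s_{0}$ with $\|\lambda_{q}(f)\|_{\mathcal{B}(\ell^{q})}\le C\|f\|_{s_{0}}$, so that $\mathcal{S}$ includes continuously and densely into $B^{q}_{r}(\Gamma)$; (iii) $\mathcal{S}$ is invariant under the isometric anti-automorphism $f\mapsto\check{f}$, $\check{f}(g)=f(g^{-1})$, since $\ell(g)=\ell(g^{-1})$; and (iv) $\mathcal{S}$ is spectral invariant (equivalently, holomorphically closed) in $B^{q}_{r}(\Gamma)$. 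Step (iv) is the technical heart: it rests on the rapid-decay estimate bounding $\|f^{n}\|_{s}$ in terms of the $\mathcal{S}$-seminorms of $f$, run exactly as in the $q=2$ case.

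Next I would bring $B^{p}_{r}(\Gamma)$ and $C^{\ast}_{r}(\Gamma)$ into the picture. For $f\in\mathbb{C}\Gamma$ the operators $\lambda_{p}(f)$ on $\ell^{p}(\Gamma)$ and $\lambda_{q}(\check{f})$ on $\ell^{q}(\Gamma)$ are adjoint to one another with respect to the standard pairing $\ell^{p}(\Gamma)\times\ell^{q}(\Gamma)\to\mathbb{C}$, and hence have equal operator norms; so $f\mapsto\check{f}$ extends to an isometric anti-isomorphism $\Phi\colon B^{p}_{r}(\Gamma)\xrightarrow{\ \sim\ }B^{q}_{r}(\Gamma)$. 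Combined with (ii) and (iii) this gives $\mathcal{S}\hookrightarrow B^{p}_{r}(\Gamma)$ continuously and densely, with $\|\lambda_{p}(f)\|_{\mathcal{B}(\ell^{p})}\le C\|f\|_{s_{0}}$; and because $\Phi$ preserves invertibility and restricts to an anti-automorphism of $\mathcal{S}$, spectral invariance of $\mathcal{S}$ in $B^{q}_{r}(\Gamma)$ passes to spectral invariance of $\mathcal{S}$ in $B^{p}_{r}(\Gamma)$. Finally, since $\tfrac1p+\tfrac1q=1$ forces $\ell^{2}(\Gamma)=[\ell^{p}(\Gamma),\ell^{q}(\Gamma)]_{1/2}$, Riesz--Thorin interpolation applied to convolution by $f$ yields $\|\lambda_{2}(f)\|_{\mathcal{B}(\ell^{2})}\le\|\lambda_{p}(f)\|_{\mathcal{B}(\ell^{p})}^{1/2}\,\|\lambda_{q}(f)\|_{\mathcal{B}(\ell^{q})}^{1/2}\le C\|f\|_{s_{0}}$ for $f\in\mathbb{C}\Gamma$, so that $\mathcal{S}$ also embeds, as a unital subalgebra, into $B^{2}_{r}(\Gamma)=C^{\ast}_{r}(\Gamma)$.

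The conclusion is then the usual idempotent-perturbation argument. Given an idempotent $e\in B^{p}_{r}(\Gamma)$, choose $b\in\mathcal{S}$ with $\|b-e\|_{\mathcal{B}(\ell^{p})}$ so small that $\operatorname{spec}_{B^{p}_{r}(\Gamma)}(b)$ lies in two small disjoint disks about $0$ and $1$ (upper semicontinuity of the spectrum, as $\operatorname{spec}(e)\subseteq\{0,1\}$); let $\chi$ be the holomorphic function equal to $0$ near $0$ and $1$ near $1$ and set $e'=\chi(b)$, an idempotent which, once $b$ is close enough to $e$, is similar to $e$ inside $B^{p}_{r}(\Gamma)$. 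Holomorphic closedness of $\mathcal{S}$ in $B^{p}_{r}(\Gamma)$ gives $e'\in\mathcal{S}\subseteq C^{\ast}_{r}(\Gamma)$; since $C^{\ast}_{r}(\Gamma)$ has no nontrivial idempotent, $e'\in\{0,1\}$, and as $e$ is similar to $e'$ this forces $e\in\{0,1\}$. Thus $B^{p}_{r}(\Gamma)$ has no nontrivial idempotent, and applying $\Phi$ — which carries idempotents to idempotents and fixes $0$ and $1$ — the same holds for $B^{q}_{r}(\Gamma)$. The only genuinely substantive point in the whole argument is step (iv), extracting holomorphic closedness of $\mathcal{S}$ inside $B^{q}_{r}(\Gamma)$ from property $(RD)_q$; everything else is soft functional analysis: Banach-space duality, Riesz--Thorin interpolation, and the standard fact that nearby idempotents are similar.
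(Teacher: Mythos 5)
Your proposal is correct and follows essentially the same route as the paper: both arguments hinge on the Liao--Yu result that the $\ell^q$-Schwartz algebra attached to an $(RD)_q$-length function embeds in $B^p_r(\Gamma)$, $B^q_r(\Gamma)$ and (via Riesz--Thorin) $C^{\ast}_r(\Gamma)$ and is stable under holomorphic functional calculus there, and then transfer idempotents through this common dense subalgebra. The only differences are cosmetic: the paper packages the perturbation step as a connected-spectrum criterion (Lemma \ref{connectedspectrum}) and routes the interpolation through the involutive algebra $B^{p,\ast}_r(\Gamma)$, whereas you use the ``nearby idempotents are similar'' argument directly and the duality anti-isomorphism $B^p_r(\Gamma)\cong B^q_r(\Gamma)$.
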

    Since that groups with property $(RD)$ have property $(RD)_q$ for any $q\in [1,2]$, thus we have the following corollary:
\begin{corollary}(cf. Corollary \ref{RDidempotent})
    Let $\Gamma$ be a discrete group. Assume $\Gamma$ has property $(RD)$, if $C^{\ast}_r(\Gamma)$ has no nontrivial idempotents, then for any $p\in [1,\infty]$, $B^p_r(\Gamma)$ also has no nontrivial idempotents. 
\end{corollary}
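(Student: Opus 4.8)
The plan is to deduce the corollary directly from Theorem~\ref{maintheorem}, feeding in the single extra input that property $(RD)$ forces property $(RD)_q$ for every $q\in[1,2]$; this is part of the basic theory of the $(RD)_q$ conditions introduced in \cite{LY1}, and it is precisely why the corollary can be stated for the full range $p\in[1,\infty]$. So the argument amounts to a short reduction together with a case distinction dictated by the (slightly asymmetric) way the hypothesis of Theorem~\ref{maintheorem} is phrased.

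First I would fix $p\in[1,\infty]$ and let $q$ denote its dual exponent, $\tfrac1p+\tfrac1q=1$. Since $p\le 2\iff q\ge 2$, exactly one of $p,q$ lies in $[1,2]$; call it $r$ (so $r=\min\{p,q\}$). By assumption $\Gamma$ has property $(RD)$, hence property $(RD)_r$.

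Now there are two cases. If $r=q$, that is, $q\in[1,2]$ and $p\in[2,\infty]$, then $\Gamma$ has property $(RD)_q$ with $q$ the dual of $p$, so Theorem~\ref{maintheorem} applies verbatim: since $C^{\ast}_r(\Gamma)$ has no nontrivial idempotents, neither does $B^p_r(\Gamma)$ (nor $B^q_r(\Gamma)$). If instead $r=p$, that is, $p\in[1,2]$ and $q\in[2,\infty]$, I would apply Theorem~\ref{maintheorem} with the roles of $p$ and $q$ interchanged: here $p$ is the dual of $q$ and $\Gamma$ has property $(RD)_p$, so the theorem applied to the exponent $q$ (with dual $p$) again yields that both $B^q_r(\Gamma)$ and $B^p_r(\Gamma)$ have no nontrivial idempotents. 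Either way $B^p_r(\Gamma)$ has no nontrivial idempotents, and as $p\in[1,\infty]$ was arbitrary this proves the corollary.

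I do not anticipate any real obstacle, since all the substance is already packaged in Theorem~\ref{maintheorem}. The two points to watch are: using the symmetry of the conclusion of Theorem~\ref{maintheorem} in $p$ and $q$, which is what lets a single hypothesis $(RD)_r$ with $r\in[1,2]$ cover every $p$; and checking that the endpoints $p=1,\infty$ are genuinely permitted, which they are because Theorem~\ref{maintheorem} holds on all of $[1,\infty]$. The only external fact used, the implication $(RD)\Rightarrow(RD)_q$ for $q\in[1,2]$, is a routine consequence of the definitions in \cite{LY1}.
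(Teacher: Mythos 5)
Your argument is correct and is essentially the paper's own proof: the paper likewise deduces the corollary by combining Theorem~\ref{maintheorem} with the fact that every group has $(RD)_1$ and that $(RD)$ implies $(RD)_q$ for $q\in(1,2)$, the dual pair $\{p,q\}$ always containing an exponent in $[1,2]$. One small caveat: the implication $(RD)\Rightarrow(RD)_q$ for $q\in(1,2)$ is not a ``routine consequence of the definitions'' but is Lafforgue's interpolation result, recorded as Theorem~\ref{RDqinterpolation} in the paper, so you should cite that theorem rather than wave at the definitions.
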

    Apart from free groups, the above results also apply to torsion free hyperbolic groups (cf. Example \ref{hyperbolicgroupcase}), torsion free groups with polynomial growth and torsion free cocompact lattices of $SL(3,\mathbb{R})$ (cf. Example \ref{polySL}). 

\section{Preliminaries}
    In this section, we will recall some notions. Let $p\in[1,\infty]$ and $\Gamma$ be a discrete group, recall that the \textit{group algebra} $\mathbb{C}\Gamma$ is the algebra of all finitely supported functions $f:\Gamma \rightarrow \mathbb{C}$ equipped with the following multiplication:
    $$f\ast g :=\sum_{\alpha,\gamma\in\Gamma}(f_{\alpha}g_{\gamma})(\alpha\gamma),$$
for any two elements $f=\sum_{\alpha\in \Gamma}f_{\alpha}\alpha$ and $g=\sum_{\gamma \in \Gamma}g_{\gamma}\gamma$ in $\mathbb{C}\Gamma$. The \textit{left regular representation} of $\Gamma$ on $\ell^p(\Gamma)$, denoted by $\lambda:\Gamma \rightarrow \mathcal{B}(\ell^p(\Gamma))$, is defined to be 
    $$(\lambda(\gamma)\xi)(\alpha):=\xi(\gamma^{-1}\alpha),$$   
for any $\gamma,\alpha \in \Gamma$ and $\xi \in \ell^p(\Gamma)$. For any $f=\sum_{\alpha\in \Gamma}f_{\alpha}\alpha$ in $\mathbb{C}\Gamma$, the \textit{reduced norm} of $f$, denoted by $\|f\|_{\mathcal{B}(\ell^p(\Gamma))}$ is defined to be 
    $$\|f\|_{\mathcal{B}(\ell^p(\Gamma))}:=\|\sum_{\alpha \in \Gamma}f_{\alpha}\lambda(\alpha)\|.$$
\begin{definition}\label{deflpalg}
    Let $p\in [1,\infty]$ and $\Gamma$ be a discrete group. 
  \begin{enumerate}[(1)]
    \item The \textit{reduced group $\ell^p$-operator algebra} of $\Gamma$, denoted by $B^p_r(\Gamma)$, is defined to be the reduced norm closure of the group algebra $\mathbb{C}\Gamma$.
    \item For any $f=\sum_{\alpha\in \Gamma}f_{\alpha}\alpha$ in $\mathbb{C}\Gamma$, let $f^{\ast}=\sum_{\alpha\in \Gamma}\overline{f_{\alpha}}\alpha^{-1}$, the \textit{reduced group involution $\ell^p$-operator algebra} of $\Gamma$, denoted by $B^{p,\ast}_r(\Gamma)$, is defined to be the completion of $\mathbb{C}\Gamma$ with respect to the following norm
    $$\|f\|_{B^{p,\ast}_r(\Gamma)}:=\max\{\|f\|_{\mathcal{B}(\ell^p(\Gamma))}, \|f^{\ast}\|_{\mathcal{B}(\ell^p(\Gamma))}\}.$$
  \end{enumerate}
\end{definition}

\begin{remark}\label{remark1}
    The reduced group $\ell^p$-operator algebra $B^p_r(\Gamma)$ is a Banach algebra, and generally, they are not the same for different $p$ (cf. \cite{LY1} and references therein). The reduced group involution $\ell^p$-operator algebra $B^{p,\ast}_r(\Gamma)$, defined by Liao and Yu in \cite{LY1}, is a Banach $\ast$-algebra and there exist some groups $G$ such that $B^{p,\ast}_r(G) \neq B^p_r(G)$ for $p\neq 2$ (cf. \cite{LY1} and references therein). 
\end{remark}

\begin{remark}\label{remark2}
    For $p\in (1,\infty)$, let $q$ be its dual number (i.e. $1/p+1/q=1$), then the dual space of $\ell^p(\Gamma)$ is $\ell^q(\Gamma)$, thus if $f^{\ast}$ is a bounded operator on $\ell^p(\Gamma)$, then $f$ is a bounded operator on $\ell^q(\Gamma)$ and we have $\|f^{\ast}\|_{\mathcal{B}(\ell^p(\Gamma))}=\|f\|_{\mathcal{B}(\ell^q(\Gamma))}$ for any $f\in \mathbb{C}\Gamma$, as a consequence, we have $B^{p,\ast}_r(\Gamma)=B^{q,\ast}_r(\Gamma)$. Obviously, we have $B^{1,\ast}_r(\Gamma)= B^1_r(\Gamma)=\ell^1(\Gamma)$ and $B^{\infty,\ast}_r(\Gamma)= B^{\infty}_r(\Gamma)=\ell^1(\Gamma)$. Besides, when $p=2$, then $B^{2,\ast}_r(\Gamma)= B^2_r(\Gamma)$ is a $C^{\ast}$-algebra, called the \textit{reduced group $C^{\ast}$-algebra} of $\Gamma$, which we shall denoted by $C^{\ast}_r(\Gamma)$. 
\end{remark}

    For any discrete group $\Gamma$, we have the following relation between $B^{p,\ast}_r(\Gamma)$ and $B^p_r(\Gamma)$.
\begin{lemma}\label{involutionsub}
    Let $p\in [1,\infty]$, then the identity map on $\mathbb{C}\Gamma$ extends to a contractive, injective homomorphism of Banach algebras:
      $$\iota_{\ast,p}: B^{p,\ast}_r(\Gamma)\rightarrow B^p_r(\Gamma),$$ 
    namely, $B^{p,\ast}_r(\Gamma)$ is contained in $B^p_r(\Gamma)$. Similarly, $B^{p,\ast}_r(\Gamma)$ is also contained in $B^q_r(\Gamma)$, where $q$ be the dual number of $p$.
\end{lemma}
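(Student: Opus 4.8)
The plan is to obtain the map almost for free from a norm comparison, to check the algebraic properties by continuity and density, and then to do genuine work only on injectivity. By Remark~\ref{remark2} one has $\|f^{\ast}\|_{\mathcal{B}(\ell^p(\Gamma))}=\|f\|_{\mathcal{B}(\ell^q(\Gamma))}$ for every $f\in\mathbb{C}\Gamma$, so in fact $\|f\|_{B^{p,\ast}_r(\Gamma)}=\max\{\|f\|_{\mathcal{B}(\ell^p(\Gamma))},\|f\|_{\mathcal{B}(\ell^q(\Gamma))}\}$; in particular $\|f\|_{B^{p,\ast}_r(\Gamma)}\geq\|f\|_{B^p_r(\Gamma)}$ and $\|f\|_{B^{p,\ast}_r(\Gamma)}\geq\|f\|_{B^q_r(\Gamma)}$. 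Hence the identity on $\mathbb{C}\Gamma$ is norm-decreasing from the $B^{p,\ast}_r$-norm to the $B^p_r$-norm, and since $\mathbb{C}\Gamma$ is dense in both completions it extends uniquely to a contraction $\iota_{\ast,p}\colon B^{p,\ast}_r(\Gamma)\to B^p_r(\Gamma)$; it is a homomorphism because it is multiplicative on the dense subalgebra $\mathbb{C}\Gamma$ and multiplication is jointly continuous on a Banach algebra. The same works verbatim with $q$ in place of $p$, giving the contractive homomorphism into $B^q_r(\Gamma)$ (one may also just quote $B^{p,\ast}_r(\Gamma)=B^{q,\ast}_r(\Gamma)$ from Remark~\ref{remark2}).

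The substantive point is injectivity, since a contractive extension of an injective map need not remain injective. If $p\in\{1,\infty\}$ there is nothing to prove: by Remark~\ref{remark2} all the algebras involved equal $\ell^1(\Gamma)$ and $\iota_{\ast,p}$ is the identity. So assume $p\in(1,\infty)$, whence also $q\in(1,\infty)$. The key device is evaluation at the point mass $\delta_e$: since $\lambda(\alpha)\delta_e=\delta_\alpha$, one has $(\sum_{\alpha}f_{\alpha}\lambda(\alpha))\delta_e=f$ inside $\ell^p(\Gamma)$, and likewise inside $\ell^q(\Gamma)$, for every $f=\sum_{\alpha}f_{\alpha}\alpha\in\mathbb{C}\Gamma$. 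Suppose $a\in B^{p,\ast}_r(\Gamma)$ satisfies $\iota_{\ast,p}(a)=0$, and choose $f_n\in\mathbb{C}\Gamma$ with $\|f_n-a\|_{B^{p,\ast}_r(\Gamma)}\to 0$. Then $\|\sum_{\alpha}(f_n)_{\alpha}\lambda(\alpha)\|_{\mathcal{B}(\ell^p(\Gamma))}\to 0$, so evaluating at $\delta_e$ gives $\|f_n\|_{\ell^p(\Gamma)}\to 0$, hence $f_n(\gamma)\to 0$ for every $\gamma\in\Gamma$. On the other hand $(f_n)$ is also Cauchy in $\mathcal{B}(\ell^q(\Gamma))$ by the norm formula above, so $\sum_{\alpha}(f_n)_{\alpha}\lambda(\alpha)$ converges in $\mathcal{B}(\ell^q(\Gamma))$ to some operator $T$; evaluating at $\delta_\gamma$, the vectors $(\sum_{\alpha}(f_n)_{\alpha}\lambda(\alpha))\delta_\gamma$ converge to $T\delta_\gamma$ in $\ell^q(\Gamma)$, hence coordinatewise, while these vectors are translates of $f_n$ and therefore tend to $0$ coordinatewise. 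Thus $T\delta_\gamma=0$ for all $\gamma$, and since the $\delta_\gamma$ span a dense subspace of $\ell^q(\Gamma)$ (here $q<\infty$), $T=0$, i.e. $\|f_n\|_{\mathcal{B}(\ell^q(\Gamma))}\to 0$. Combined with $\|f_n\|_{\mathcal{B}(\ell^p(\Gamma))}\to 0$ this yields $\|f_n\|_{B^{p,\ast}_r(\Gamma)}\to 0$, so $a=0$. Interchanging the roles of $p$ and $q$ gives injectivity of the map into $B^q_r(\Gamma)$.

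I expect the injectivity argument to be the only real obstacle, and within it the delicate step is identifying the a priori unknown $\ell^q$-limit $T$: the argument works because $\ell^q$-norm convergence forces coordinatewise convergence, so $T$ is already pinned down by the matrix coefficients $f_n(\gamma)$ that are visible on the $\ell^p$ side. Everything else --- the norm formula, the homomorphism property, and the endpoint cases --- is routine.
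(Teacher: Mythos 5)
Your proof is correct and follows essentially the same route as the paper: both arguments reduce injectivity to the fact that the coefficient vector of the limit operator is recovered by evaluating at $\delta_e$ (so vanishing on the $\ell^p$ side forces all coefficients, and hence the $\ell^q$-limit operator, to vanish). You are merely more explicit than the paper at the one delicate point, namely identifying the a priori unknown $\ell^q$-operator limit $T$ through its matrix coefficients $T\delta_\gamma$, which the paper subsumes into the assertion $T=\lambda(f)$ on both spaces.
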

\begin{proof}
    Obviously, $\iota_{\ast,p}$ is a contractive homomorphism, now we prove that it is injective. Assume $\iota_{\ast,p}(T)=0$ and a family of elements $\{f_i\}_{i\in I}$ in $\mathbb{C}\Gamma$ which converges to $T$ in $B^{p,\ast}_r(\Gamma)$. Since $\max\{\|f_i\|_{\ell^p(\Gamma)}, \|f_i\|_{\ell^q(\Gamma)}\}\leq \|f_i\|_{B^{p,\ast}_r(\Gamma)}$, thus $\{f_i\}_{i\in I}$ converges to an element $f$ in $\ell^p(\Gamma)\cap \ell^q(\Gamma)$ (where $q$ is the dual number of $p$) and we have $T=\lambda(f)$ in $B^{p,\ast}_r(\Gamma)$ which means 
  \begin{equation}\label{operator-vector}
    T=\sum_{\alpha\in \Gamma}f_{\alpha}\lambda(\alpha),
  \end{equation}
as operators on $\ell^p(\Gamma)$ and $\ell^q(\Gamma)$, where $f=\sum_{\alpha\in \Gamma}f_{\alpha}\alpha$ (which may be an infinite sum) and $\lambda$ be the left regular representation of $\Gamma$ on $\ell^p(\Gamma)$ and $\ell^q(\Gamma)$. By the assumption, $T=0$ as an operator on $\ell^p(\Gamma)$, thus $f=0$ as a vector in $\ell^p(\Gamma)\cap \ell^q(\Gamma)$ which implies $f_{\alpha}=0$ for any $\alpha\in \Gamma$. Then by (\ref{operator-vector}), we obtain $T=0$ in $B^{p,\ast}_r(\Gamma)$ which implies that $\iota_{\ast,p}$ is an injective homomorphism.\par
    By the similar discussion, we obtain that $B^{p,\ast}_r(\Gamma)$ is also contained in $B^q_r(\Gamma)$. 
\end{proof}

    The following proposition is due to Liao and Yu (cf. \cite[Proposition 2.4]{LY1}). For convenience of the reader, we show its proof.
    
\begin{proposition}(\cite[Proposition 2.4]{LY1})\label{interpolation}
     Let $p\in [1,\infty]$ and $\Gamma$ be a discrete group, then the identity map on $\mathbb{C}\Gamma$ extends to a contractive, injective homomorphism of Banach algebras
      $$\iota_{p,2}: B^{p,\ast}_r(\Gamma)\rightarrow C^{\ast}_r(\Gamma).$$
\end{proposition}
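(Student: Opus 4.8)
The plan is to obtain $\iota_{p,2}$ from the Riesz--Thorin interpolation theorem applied to convolution operators. First I would record, via Remark \ref{remark2}, that for $f \in \mathbb{C}\Gamma$ one has $\|f\|_{B^{p,\ast}_r(\Gamma)} = \max\{\|f\|_{\mathcal{B}(\ell^p(\Gamma))}, \|f\|_{\mathcal{B}(\ell^q(\Gamma))}\}$, where $q$ denotes the dual exponent of $p$. When $p \in \{1,\infty\}$ this equals $\|f\|_{\ell^1(\Gamma)}$ and the assertion is immediate from the elementary bound $\|\lambda(f)\|_{\mathcal{B}(\ell^2(\Gamma))} \le \|f\|_{\ell^1(\Gamma)}$ (Young's inequality), so I may assume $p \in (1,\infty)$.

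The key step is that for $f \in \mathbb{C}\Gamma$ the operator $\sum_{\alpha\in\Gamma} f_\alpha \lambda(\alpha)$ is nothing but left convolution by $f$, a single linear map on finitely supported functions that is simultaneously bounded on $\ell^p(\Gamma)$ and on $\ell^q(\Gamma)$. Since $\tfrac1p + \tfrac1q = 1$ gives $\tfrac12 = \tfrac12\cdot\tfrac1p + \tfrac12\cdot\tfrac1q$, the exponent $2$ lies between $p$ and $q$ with interpolation parameter $\tfrac12$, and Riesz--Thorin yields
\[
  \|\lambda(f)\|_{\mathcal{B}(\ell^2(\Gamma))} \;\le\; \|\lambda(f)\|_{\mathcal{B}(\ell^p(\Gamma))}^{1/2}\,\|\lambda(f)\|_{\mathcal{B}(\ell^q(\Gamma))}^{1/2} \;\le\; \|f\|_{B^{p,\ast}_r(\Gamma)}.
\]
Hence the identity map of $\mathbb{C}\Gamma$ is norm-decreasing from $\|\cdot\|_{B^{p,\ast}_r(\Gamma)}$ to $\|\cdot\|_{C^\ast_r(\Gamma)}$; being multiplicative on the dense subalgebra $\mathbb{C}\Gamma$, it extends to a contractive homomorphism of Banach algebras $\iota_{p,2}\colon B^{p,\ast}_r(\Gamma)\to C^\ast_r(\Gamma)$.

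For injectivity I would repeat the argument of Lemma \ref{involutionsub}. Suppose $T\in B^{p,\ast}_r(\Gamma)$ satisfies $\iota_{p,2}(T)=0$, and take $f_i\in\mathbb{C}\Gamma$ with $f_i\to T$ in $B^{p,\ast}_r(\Gamma)$. Writing $\delta_e$ for the characteristic function of the unit, one has $\|g\|_{\ell^p(\Gamma)} = \|\lambda(g)\delta_e\|_{\ell^p(\Gamma)} \le \|\lambda(g)\|_{\mathcal{B}(\ell^p(\Gamma))} \le \|g\|_{B^{p,\ast}_r(\Gamma)}$ for $g\in\mathbb{C}\Gamma$, and likewise with $q$ in place of $p$; hence $(f_i)$ is Cauchy in $\ell^p(\Gamma)\cap\ell^q(\Gamma)$ and converges there to some $f=\sum_\alpha f_\alpha\alpha$, with $T=\sum_\alpha f_\alpha\lambda(\alpha)$ as an operator on each of $\ell^p(\Gamma)$, $\ell^q(\Gamma)$ and $\ell^2(\Gamma)$ --- the last since log-convexity of the $\ell^r$-norms gives $\ell^p(\Gamma)\cap\ell^q(\Gamma)\subseteq\ell^2(\Gamma)$. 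Then $0=\iota_{p,2}(T)\delta_e = f$ in $\ell^2(\Gamma)$, so $f_\alpha=0$ for all $\alpha$ and therefore $T=0$.

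The one point deserving care is the legitimacy of the interpolation: one must note that it is literally the same operator being interpolated, which is clear because the two boundedness assertions both describe convolution by $f$ on the common dense space of finitely supported functions, and that $2$ is admissible as an intermediate exponent with parameter $\tfrac12$. Beyond this bookkeeping I expect no genuine obstacle; the proposition is essentially the statement that $C^\ast_r(\Gamma)$ is the interpolation midpoint of $B^p_r(\Gamma)$ and $B^q_r(\Gamma)$.
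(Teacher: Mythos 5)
Your proposal is correct and follows essentially the same route as the paper: contractivity via the Riesz--Thorin interpolation theorem applied to the convolution operator $\lambda(f)$, which is bounded on both $\ell^p(\Gamma)$ and $\ell^q(\Gamma)$ with norm at most $\|f\|_{B^{p,\ast}_r(\Gamma)}$, and injectivity by the same argument as Lemma \ref{involutionsub}, identifying $T$ with $\lambda(f)$ for a vector $f\in\ell^p(\Gamma)\cap\ell^q(\Gamma)$ and evaluating at $\delta_e$. You are somewhat more explicit than the paper (the endpoint cases $p\in\{1,\infty\}$, the interpolation exponent $\tfrac12=\tfrac12\cdot\tfrac1p+\tfrac12\cdot\tfrac1q$, and the inclusion $\ell^p\cap\ell^q\subseteq\ell^2$), but these are refinements of the same argument rather than a different one.
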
 
\begin{proof}
    By Remark \ref{remark2}, any element $T\in B^{p,\ast}_r(\Gamma)$ is not only a bounded operator on $\ell^p(\Gamma)$ with the norm less than $\|T\|_{B^{p,\ast}_r(\Gamma)}$, but also a bounded operator on $\ell^q(\Gamma)$ with the norm less than $\|T\|_{B^{p,\ast}_r(\Gamma)}$, where $q$ is the dual number of $p$. Thus, by Riesz-Thorin interpolation theorem, $T$ is a bounded operator on $\ell^2(\Gamma)$ with the norm less than $\|T\|_{B^{p,\ast}_r(\Gamma)}$, which implies that $\iota_{p,2}$ is a contractive homomorphism.\par
    Now we show that $\iota_{p,2}$ is injective. Assume $\iota_{p,2}(T)=0$, as the proof of Lemma \ref{involutionsub}, there exists a vector $f\in \ell^p(\Gamma)\cap \ell^q(\Gamma)$ such that $T=\lambda(f)$ as operators on $\ell^p(\Gamma)$ and $\ell^q(\Gamma)$. By Riesz-Thorin interpolation theorem again, we have $T=\lambda(f)$ as operators on $\ell^2(\Gamma)$. By the assumption, $T=0$ in $C^{\ast}_r(\Gamma)$ which implies that $f=0$, thus $T=0$ in $B^{p,\ast}_r(\Gamma)$ which implies that $\iota_{p,2}$ is an injective homomorphism.
\end{proof}
    Recall that an idempotent in a unital Banach space is called nontrivial, if it is neither the zero element $0$ nor the unit element $I$. 
\begin{corollary}\label{Cimplyp}
    Let $p\in [1,\infty]$ and $\Gamma$ be a discrete group, if $C^{\ast}_r(\Gamma)$ has no nontrivial idempotents, then $B^{p,\ast}_r(\Gamma)$ also has no nontrivial idempotents.
\end{corollary}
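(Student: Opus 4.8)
The plan is to transport an arbitrary idempotent of $B^{p,\ast}_r(\Gamma)$ along the homomorphism $\iota_{p,2}$ furnished by Proposition \ref{interpolation}, invoke the hypothesis on $C^{\ast}_r(\Gamma)$ to see that its image is trivial, and then pull the conclusion back using injectivity of $\iota_{p,2}$.

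First I would record that $\iota_{p,2}: B^{p,\ast}_r(\Gamma)\rightarrow C^{\ast}_r(\Gamma)$ is unital. Indeed, it extends the identity map on $\mathbb{C}\Gamma$, and the unit of both $B^{p,\ast}_r(\Gamma)$ and $C^{\ast}_r(\Gamma)$ is the function $\delta_{e}$ supported at the neutral element $e$ of $\Gamma$ with value $1$, which already lies in $\mathbb{C}\Gamma$; hence $\iota_{p,2}(I)=I$. Next, let $a\in B^{p,\ast}_r(\Gamma)$ be an idempotent, so $a^2=a$. Since $\iota_{p,2}$ is an algebra homomorphism, $\iota_{p,2}(a)^2=\iota_{p,2}(a^2)=\iota_{p,2}(a)$, so $\iota_{p,2}(a)$ is an idempotent in $C^{\ast}_r(\Gamma)$.

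By hypothesis $C^{\ast}_r(\Gamma)$ has no nontrivial idempotents, so either $\iota_{p,2}(a)=0$ or $\iota_{p,2}(a)=I=\iota_{p,2}(I)$. In the first case, injectivity of $\iota_{p,2}$ (Proposition \ref{interpolation}) gives $a=0$; in the second case, the same injectivity gives $a=I$. In either case $a$ is a trivial idempotent of $B^{p,\ast}_r(\Gamma)$, which is the desired conclusion.

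As for obstacles, there is essentially none: the corollary is a formal consequence of Proposition \ref{interpolation} once one notes that $\iota_{p,2}$ is unital in addition to being contractive and injective. The only point worth a line of care is precisely that unitality, since it is what excludes the scenario in which $a$ maps to $I$ in $C^{\ast}_r(\Gamma)$ yet differs from the unit of $B^{p,\ast}_r(\Gamma)$.
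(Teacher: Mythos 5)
Your proposal is correct and follows essentially the same route as the paper: push an idempotent forward along the injective homomorphism $\iota_{p,2}$ of Proposition \ref{interpolation}, apply the hypothesis on $C^{\ast}_r(\Gamma)$, and pull back via injectivity (the paper phrases this contrapositively). Your explicit remark that $\iota_{p,2}$ is unital is a small detail the paper leaves implicit, but it does not change the argument.
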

\begin{proof}
    Assume that $e$ is a nontrivial idempotent in $B^{p,\ast}_r(\Gamma)$, then $\iota_{p,2}(e)$ is a nontrivial idempotent in $C^{\ast}_r(\Gamma)$, since that $\iota_{p,2}$ is an injective homomorphism, we get a contradiction.
\end{proof}

\section{Idempotents and Property $(RD)_q$}

    In this section, we will explore nontrivial idempotents in $B^p_r(\Gamma)$ and in $B^q_r(\Gamma)$ from nontrivial idempotents in $C^{\ast}_r(\Gamma)$ by using of property $(RD)_q$ of group $\Gamma$, where $q\in[1,2]$ and $p$ is the dual number of $q$. Firstly, we give the following key lemma.
    


\begin{lemma}\label{connectedspectrum}
    Let $A$ be a Banach algebra, then $A$ has no nontrivial idempotents if and only if for some (any) dense subset $F\subseteq A$, we have $sp(a)$ is connected for any $a\in F$, where $sp(a)$ is the spectrum of $a$ in $A$.
\end{lemma}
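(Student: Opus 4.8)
The plan is to relate nontrivial idempotents to disconnectedness of the spectrum through the holomorphic (Riesz) functional calculus, and to close the cycle of implications ``$A$ has no nontrivial idempotents'' $\Rightarrow$ ``$sp(a)$ is connected for every $a\in A$'' $\Rightarrow$ ``$sp(a)$ is connected for every $a$ in some (any) dense $F\subseteq A$'' $\Rightarrow$ ``$A$ has no nontrivial idempotents''. Throughout I take $A$ unital --- this is the setting in which ``nontrivial idempotent'' was defined, and the algebras of interest here are unital; otherwise one adjoins a unit --- so that $sp(a)$ is a nonempty compact subset of $\mathbb{C}$ for every $a\in A$. Two elementary observations will be reused: (i) any idempotent $e$ satisfies $sp(e)\subseteq\{0,1\}$, since for $\lambda\notin\{0,1\}$ the element $e-\lambda I$ is inverted by an explicit polynomial in $e$; (ii) a nonzero idempotent $e$ has $\|e\|\geq 1$, because $\|e\|=\|e^{2}\|\leq\|e\|^{2}$. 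Applying (ii) to $I-e$ as well, a nontrivial idempotent $e$ satisfies $\|e\|\geq 1$ and $\|I-e\|\geq 1$; and in fact $sp(e)=\{0,1\}$ exactly, because $sp(e)=\{0\}$ would make $e$ a quasinilpotent idempotent, hence $0$, and $sp(e)=\{1\}$ would force $I-e=0$.

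First I would treat the implication ``no nontrivial idempotents $\Rightarrow$ every $a\in A$ has connected spectrum''. Arguing by contradiction, suppose $sp(a)=K_{0}\sqcup K_{1}$ with $K_{0},K_{1}$ nonempty and compact, and choose disjoint open sets $U_{0}\supseteq K_{0}$, $U_{1}\supseteq K_{1}$. Let $h$ be the locally constant (hence holomorphic) function on $U_{0}\cup U_{1}$ that is $0$ on $U_{0}$ and $1$ on $U_{1}$; since $U_{0}\cup U_{1}$ is a neighborhood of $sp(a)$, the element $e:=h(a)$ is defined, satisfies $e^{2}=(h^{2})(a)=h(a)=e$, and by the spectral mapping theorem $sp(e)=h(sp(a))=\{0,1\}$. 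Thus $e$ is a nontrivial idempotent, contradicting the hypothesis. Hence $sp(a)$ is connected for every $a\in A$, and therefore for every $a$ in any subset of $A$ --- in particular in any dense $F$, and in particular in $A$ itself, which is dense in $A$.

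The substantive step is the converse: if $sp(a)$ is connected for every $a$ in some dense $F\subseteq A$, then $A$ has no nontrivial idempotents. I would prove the contrapositive via the claim that the set $\{a\in A: sp(a)\text{ is disconnected}\}$ is open. Given $a_{0}$ with $sp(a_{0})=K_{0}\sqcup K_{1}$ as above, take disjoint open $U_{0}\supseteq K_{0}$, $U_{1}\supseteq K_{1}$ and the same locally constant $h$. By upper semicontinuity of the spectrum there is $\delta_{1}>0$ with $sp(a)\subseteq U_{0}\cup U_{1}$ whenever $\|a-a_{0}\|<\delta_{1}$; on this ball $h(a)$ is defined, is an idempotent, and $a\mapsto h(a)$ is continuous, with $h(a_{0})$ a nontrivial idempotent by the previous paragraph. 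Since $\|h(a_{0})\|\geq 1$ and $\|I-h(a_{0})\|\geq 1$, continuity gives $\delta\leq\delta_{1}$ such that $\|a-a_{0}\|<\delta$ forces $h(a)\neq 0$ and $h(a)\neq I$; so $h(a)$ is a nontrivial idempotent, $sp(h(a))=\{0,1\}$, and $h(sp(a))=\{0,1\}$ by spectral mapping. As $sp(a)\subseteq U_{0}\sqcup U_{1}$ with $h\equiv 0$ on $U_{0}$ and $h\equiv 1$ on $U_{1}$, this means $sp(a)$ meets both $U_{0}$ and $U_{1}$, so $sp(a)=(sp(a)\cap U_{0})\sqcup(sp(a)\cap U_{1})$ is a disconnection. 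Now if $A$ had a nontrivial idempotent $e$, then $sp(e)=\{0,1\}$ is disconnected, so some ball around $e$ consists entirely of elements with disconnected spectrum; by density it meets $F$, contradicting that every element of $F$ has connected spectrum. Combining the two implications gives all the equivalences, including the ``some (any)'' clause.

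I expect the openness claim to be the main obstacle. Upper semicontinuity of the spectrum alone is insufficient: it only confines $sp(a)$ to $U_{0}\cup U_{1}$ and leaves open the possibility that under perturbation the spectrum collapses entirely into $U_{0}$ (or into $U_{1}$), which is not a disconnection. The missing ingredient is the continuity of $a\mapsto h(a)$: the idempotent $h(a)$ deforms continuously from the nontrivial idempotent $h(a_{0})$, hence stays nontrivial nearby, and its spectrum $\{0,1\}$ then pulls back through the spectral mapping theorem to force $sp(a)$ to straddle both $U_{0}$ and $U_{1}$. The only other point requiring a little care is the reduction to the unital case (irrelevant for the present applications, where $A$ is already unital), handled by passing to the unitization.
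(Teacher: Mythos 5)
Your proposal is correct and follows essentially the same route as the paper: the Riesz idempotent obtained by holomorphic functional calculus plus the spectral mapping theorem for the easy direction, and a perturbation argument hinging on norm-continuity of the holomorphic functional calculus near the idempotent for the converse. The only difference is in packaging --- the paper confines $sp(b)$ away from the line $Re(x)=1/2$ via the explicit estimate $\|b^{2}-b\|<1/4$ and applies the half-plane function $\chi$ directly to a nearby element of $F$, whereas you abstract the same mechanism into openness of the set of elements with disconnected spectrum using upper semicontinuity of the spectrum.
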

\begin{proof}
    Firstly, if $A$ has no nontrivial idempotents, we want to show that the spectrum of any element in $A$ is connected. Assume it is not true, namely, there exists an element $a\in A$, such that $sp(a)$ is disconnected, then there exist two disjoint open subsets $U,V\subseteq\mathbb{C}$ such that $sp(a)\subseteq U\cup V$, $sp(a)\cap U\neq \emptyset$ and $sp(a)\cap V\neq \emptyset$. Let $f$ be a function on $U\cup V$ such that $f|_U=0$ and $f|_V=1$, then $f$ is a holomorphic function on the neighborhood of $sp(a)$ and $f^2=f$. Taking holomorphic functional calculus, we obtain an idempotent $f(a)$ in $A$ and by spectral mapping theorem, we have $sp(f(a))=\{0,1\}$, which implies $f(a)$ is a nontrivial idempotent. Thus, we get a contradiction.\par
    For the other direction, assume $A$ has a nontrivial idempotent $a$, then $1-a$ is also a nontrivial idempotent and $sp(a)=sp(1-a)=\{0,1\}$. Since that $F$ is dense in $A$, there exists an element $b\in F$ such that $\|b-a\|<\min\{1/(4(2\|a\|+1)), \|a\| \}$ which implies $\|b^2-b\|<1/4$, thus $sp(b)\subset \{x\in \mathbb{C}: Re(x)\neq 1/2\}$. Let $\chi$ be a function such that $\chi(x)=1$ for $Re(x)>1/2$ and $\chi(x)=0$ for $Re(x)<1/2$. Since the holomorphic functional calculus by $\chi$ is norm continuous in the neighborhood of $a$, then there exists $\delta<\min\{1/(4(2\|a\|+1)), \|a\| \}$, such that 
  \begin{equation}\label{holocalcon}
     \|\chi(b')-a\|=\|\chi(b')-\chi(a)\|<1,
  \end{equation}
for some $b'\in F$ with $\|b'-a\|<\delta$. Thus $sp(b')\cap \{x\in \mathbb{C}: Re(x)<1/2\} \neq \emptyset$, otherwise, we have $\chi(b')=1$ which implies that $a$ is invertible. By the similar discussion for $1-a$, we obtain that $sp(b')\cap \{x\in \mathbb{C}: Re(x)>1/2\} \neq \emptyset$. In conclusion, $sp(b')$ is disconnected which contradict with the assumption that the spectrum of any element in $F$ is connected. 
\end{proof}

   For a discrete group $\Gamma$, a \textit{length function} on $\Gamma$ is a function $l:\Gamma \rightarrow [0,\infty)$ such that 
  \begin{enumerate}[(1)]
   \item $l(\gamma)=0$ if and only if $\gamma$ is the identity element,
   \item $l(\gamma^{-1})=l(\gamma)$ for any $\gamma\in \Gamma$,
   \item $l(\gamma_1+\gamma_2)\leq l(\gamma_1)+l(\gamma_2)$ for any $\gamma_1, \gamma_2 \in \Gamma$.
  \end{enumerate}
Let $e$ be the identity element of $\Gamma$, for any $n\geq 0$, denoted by $B_n(e)$ the set of all elements $\gamma$ in $\Gamma$ with $l(\gamma)\leq n$. 
    
\begin{definition}\label{defRD}
    Let $q\in[1,\infty]$ and $\Gamma$ be a discrete group. Say that $\Gamma$ has \textit{property $(RD)_q$} (with respect to a length function $l$), if there exists a polynomial $\mathcal{P}$ such that for any function $f\in \mathbb{C}\Gamma$ with support in $B_n(e)$, we have
\begin{equation*}
\|f\|_{\mathcal{B}(\ell^q(\Gamma))}\leq \mathcal{P}(n)\|f\|_{\ell^q(\Gamma)}.
\end{equation*}
\end{definition}

\begin{remark}
    The property $(RD)_q$ (more generally, defined for locally compact groups) was introduced by Liao and Yu (cf. \cite[Section 4]{LY1}) in order to compute the $K$-theory of $B^{q}_r(\Gamma)$ and $B^{q,\ast}_r(\Gamma)$. It is obvious that every group has property $(RD)_1$. When $q\in (2,\infty]$, Liao and Yu proved that a countable discrete group has property $(RD)_q$ with respect to a length function $l$ if and only if it has polynomial growth in $l$ (cf. \cite[Section 4]{LY1}). When $p=2$, the property $(RD)_2$ is called to be \textit{property $(RD)$} which was introduced by Jolissaint (cf. \cite{Jolissaint1}) and has important applications to Novikov conjecture (cf. \cite{ConnesMoscovici}) and Baum-Connes conjecture (cf. \cite{Lafforgue2002}). 
\end{remark}
    
    The following theorem is due to Lafforgue. Please refer to \cite[Theorem 4.4]{LY1} for two different proofs given by Liao, Yu and Pisier, respectively.

\begin{theorem}(V. Lafforgue)\label{RDqinterpolation}
    If $\Gamma$ is a discrete group with property $(RD)_q$ for some $q>1$, then it has $(RD)q^{\prime}$ for any $q^{\prime}\in(1,q)$. In particular, property $(RD)$ implies property $(RD)_q$ for any $q\in(1,2)$.
\end{theorem}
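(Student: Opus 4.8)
The plan is to derive property $(RD)_{q'}$, for an arbitrary $q'\in(1,q)$, from property $(RD)_q$ by means of Stein's interpolation theorem for an \emph{analytic family} of convolution operators; the ``in particular'' clause is then the special case in which the source exponent is $2$, since property $(RD)$ is by definition property $(RD)_2$. It is worth saying first why the naive approach fails. Every group has property $(RD)_1$ (with the constant polynomial $1$), so one is tempted to interpolate the bound $\|\lambda(f)\|_{\mathcal B(\ell^q(\Gamma))}\le\mathcal P(n)\|f\|_{\ell^q(\Gamma)}$ against the trivial bound on $\ell^1(\Gamma)$. But the Riesz--Thorin theorem only gives $\|\lambda(f)\|_{\mathcal B(\ell^{q'}(\Gamma))}\le\mathcal P(n)^{\theta}\|f\|_{\ell^1(\Gamma)}^{1-\theta}\|f\|_{\ell^q(\Gamma)}^{\theta}$, and bounding the mixed norm $\|f\|_{\ell^1(\Gamma)}^{1-\theta}\|f\|_{\ell^q(\Gamma)}^{\theta}$ by $\|f\|_{\ell^{q'}(\Gamma)}$ costs a factor of size $|B_n(e)|^{1-1/q'}$, which grows exponentially in $n$ for free groups. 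Allowing the operator, not just its norm, to vary analytically is exactly what removes this loss.

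Here are the steps I would carry out. Fix $q'\in(1,q)$ and put $\theta=\dfrac{1-1/q'}{1-1/q}\in(0,1)$. Since the inequality to be proved is homogeneous of degree one in $f$, it suffices to treat $f\in\mathbb C\Gamma$ supported in $B_n(e)$ with $\|f\|_{\ell^{q'}(\Gamma)}=1$. For $z$ in the closed strip $\{0\le\Re z\le 1\}$ define $f_z\in\mathbb C\Gamma$ by $f_z(\gamma)=|f(\gamma)|^{\,q'(1-(1-1/q)z)}f(\gamma)/|f(\gamma)|$ when $f(\gamma)\ne 0$ and $f_z(\gamma)=0$ otherwise, and set $T_z=\lambda(f_z)=\sum_{\gamma\in B_n(e)}f_z(\gamma)\lambda(\gamma)$. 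Each $f_z$ is supported in $B_n(e)$; the coefficient functions $z\mapsto f_z(\gamma)$ are entire, and on the strip one has $|f_z(\gamma)|=|f(\gamma)|^{q'(1-(1-1/q)\Re z)}$ with exponent confined to $[q'/q,q']$, so $\{T_z\}$ is an analytic family of finite-rank operators of admissible growth, and $T_\theta=\lambda(f)$.

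I would then verify the two edge estimates. On $\Re z=0$: $|f_{it}(\gamma)|=|f(\gamma)|^{q'}$, so $\|f_{it}\|_{\ell^1(\Gamma)}=\|f\|_{\ell^{q'}(\Gamma)}^{q'}=1$ and hence $\|T_{it}\|_{\mathcal B(\ell^1(\Gamma))}\le\|f_{it}\|_{\ell^1(\Gamma)}=1$. On $\Re z=1$: $|f_{1+it}(\gamma)|=|f(\gamma)|^{q'/q}$, so $\|f_{1+it}\|_{\ell^q(\Gamma)}^q=\|f\|_{\ell^{q'}(\Gamma)}^{q'}=1$, and since $f_{1+it}$ is supported in $B_n(e)$, property $(RD)_q$ gives $\|T_{1+it}\|_{\mathcal B(\ell^q(\Gamma))}\le\mathcal P(n)\|f_{1+it}\|_{\ell^q(\Gamma)}=\mathcal P(n)$. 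Because $\tfrac1{q'}=(1-\theta)\cdot 1+\theta\cdot\tfrac1q$, Stein's interpolation theorem (with the constant edge bounds $1$ and $\mathcal P(n)$; it suffices to test against finitely supported $\xi$, which compute the operator norm by density) yields $\|\lambda(f)\|_{\mathcal B(\ell^{q'}(\Gamma))}=\|T_\theta\|_{\mathcal B(\ell^{q'}(\Gamma))}\le 1^{1-\theta}\mathcal P(n)^{\theta}=\mathcal P(n)^{\theta}$. Removing the normalization, $\|\lambda(f)\|_{\mathcal B(\ell^{q'}(\Gamma))}\le\mathcal P(n)^{\theta}\|f\|_{\ell^{q'}(\Gamma)}$ for all $f$ supported in $B_n(e)$; replacing $\mathcal P$ by a polynomial dominating $n\mapsto\mathcal P(n)^{\theta}$ (one may assume $\mathcal P\ge 0$ on the non-negative integers and take $1+\mathcal P$) gives property $(RD)_{q'}$.

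The one point needing care is the choice of the exponent function $z\mapsto q'(1-(1-1/q)z)$: it must be affine in $z$ (for the family to be analytic and for Stein's theorem to apply), it must take the value $1$ at $z=\theta$ (so that $T_\theta=\lambda(f)$), and it must have real parts $q'$ at $\Re z=0$ and $q'/q$ at $\Re z=1$ (so that $\|f_{it}\|_{\ell^1(\Gamma)}$ and $\|f_{1+it}\|_{\ell^q(\Gamma)}^q$ both equal the fixed number $\|f\|_{\ell^{q'}(\Gamma)}^{q'}$). These demands are consistent precisely because $\theta$ has been defined so that $1/q'=(1-\theta)+\theta/q$, which is exactly the exponent relation Stein's theorem needs; checking this consistency is the heart of the matter, while the admissible-growth hypothesis is immediate from the finite support of $f$ and the boundedness of the coefficients on the strip. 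Since the argument is uniform in $q'\in(1,q)$, no separate treatment is required for $q'$ near $1$.
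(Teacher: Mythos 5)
The paper does not actually prove Theorem \ref{RDqinterpolation}: it attributes the statement to V.~Lafforgue and refers to \cite[Theorem 4.4]{LY1} for two proofs (one written up by Liao and Yu, one due to Pisier), so there is no in-paper argument to compare against. Your Stein-interpolation proof is correct and complete, and it is in substance the standard (Lafforgue/Liao--Yu) argument; Pisier's alternative replaces the analytic family by complex interpolation of the underlying spaces. The key consistency checks all go through: $\theta=(1-1/q')/(1-1/q)$ lies in $(0,1)$ and satisfies $1/q'=(1-\theta)+\theta/q$; the affine exponent $q'(1-(1-1/q)z)$ takes the values $q'$, $q'/q$ and $1$ at $\Re z=0$, $\Re z=1$ and $z=\theta$ respectively, so $T_\theta=\lambda(f)$ and the two edge families are normalized by $\|f_{it}\|_{\ell^1(\Gamma)}=\|f_{1+it}\|_{\ell^q(\Gamma)}^q=\|f\|_{\ell^{q'}(\Gamma)}^{q'}=1$; the $\ell^1$ edge bound is the triangle inequality over the isometries $\lambda(\gamma)$, and the $\ell^q$ edge bound is exactly property $(RD)_q$ applied to $f_{1+it}$, which has the same support $B_n(e)$ as $f$. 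Admissible growth is immediate from finite support, and your opening observation about why plain Riesz--Thorin loses a factor $|B_n(e)|^{1-1/q'}$ correctly motivates the need for the analytic family. The only hypotheses used implicitly are harmless: $\mathcal P(n)\geq 0$ on the relevant integers (automatic, since $\mathcal P(n)$ bounds a norm), so $1+\mathcal P$ dominates $\mathcal P^{\theta}$, and density of finitely supported vectors in $\ell^{q'}(\Gamma)$ for $q'<\infty$, which you note.
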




    
    Let $B$ be a unital Banach algebra, $A$ be a subalgebra of $B$, containing the unit element of $B$, we say that $A$ is \textit{stable under the holomorphic functional calculus} in $B$, if for every $a\in A$ and $f$ holomorphic in a neighborhood of $sp_{B}(a)$, the element $f(a)$ of $B$ lies in $A$, where $sp_{B}(a)$ is the spectrum of $a$ in $B$. We say that $A$ is a \textit{spectral invariant subalgebra} of $B$, if $sp_{A}(a)=sp_{B}(a)$ for any element $a\in A$. \par
    
    In \cite{Schweitzer1}, Schweitzer showed that the above two notions are equivalent.   
\begin{lemma}(\cite[Lemma 1.2]{Schweitzer1})\label{Schweitzerlemma}
    Let $B$ be a unital Banach algebra, $A$ be a Fr\'echet subalgebra of $B$, containing the unit element of $B$, then $A$ is stable under the holomorphic functional calculus in $B$ if and only if $A$ is spectral invariant in $B$.
\end{lemma}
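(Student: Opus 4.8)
The plan is to prove the two implications separately, one being essentially formal and the other carrying all the analytic weight.

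For the implication that stability under the holomorphic functional calculus entails spectral invariance, note first that $sp_B(a)\subseteq sp_A(a)$ holds for any unital subalgebra, since an inverse of $\lambda-a$ in $A$ is simultaneously one in $B$. Conversely, given $\lambda\notin sp_B(a)$, apply the hypothesis to $f(z)=(\lambda-z)^{-1}$, which is holomorphic on $\mathbb{C}\setminus\{\lambda\}$, an open neighbourhood of the closed set $sp_B(a)$: stability gives $f(a)\in A$, and multiplicativity of the holomorphic functional calculus applied to the identity $f(z)(\lambda-z)\equiv1$ near $sp_B(a)$ gives $f(a)(\lambda-a)=(\lambda-a)f(a)=1$. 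As this identity already holds in $B$ and $A$ contains the unit, $\lambda-a$ is invertible in $A$, so $\lambda\notin sp_A(a)$; hence $sp_A(a)=sp_B(a)$.

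For the converse, assume $A$ is spectral invariant, and fix $a\in A$ together with $f$ holomorphic on an open neighbourhood $U$ of $sp_B(a)=sp_A(a)$. Choose a cycle $\Gamma\subseteq U\setminus sp_B(a)$, consisting of finitely many piecewise-smooth closed curves, with winding number $1$ around each point of $sp_B(a)$ and $0$ around each point of $\mathbb{C}\setminus U$, so that $f(a)=\frac{1}{2\pi i}\int_\Gamma f(z)(z-a)^{-1}\,dz$ as an element of $B$ (a $B$-valued Riemann integral). Since $\Gamma\subseteq U\setminus sp_A(a)$, spectral invariance ensures $(z-a)^{-1}\in A$ for every $z\in\Gamma$, and by uniqueness of inverses this resolvent is the same whether computed in $A$ or in $B$. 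The idea is to re-read the same integral in $A$: if $z\mapsto f(z)(z-a)^{-1}$ is continuous as a map $\Gamma\to A$, then, $A$ being complete and $\Gamma$ compact, the corresponding $A$-valued Riemann integral exists in $A$, and the continuous inclusion $A\hookrightarrow B$ — which commutes with Riemann integrals of continuous functions — carries it onto $f(a)$, whence $f(a)\in A$.

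The step I expect to be the main obstacle is therefore to prove that $z\mapsto(z-a)^{-1}$ is continuous from $\rho_A(a)$ into $A$ with its Fréchet topology. Spectral invariance is used here twice: it makes $\rho_A(a)=\rho_B(a)$ an open subset of $\mathbb{C}$, and it legitimises the resolvent identity $(z-a)^{-1}-(w-a)^{-1}=-(z-w)(z-a)^{-1}(w-a)^{-1}$ as an identity in $A$. Combined with the joint continuity of multiplication in $A$, this identity reduces the desired continuity to the assertion that $\{(z-a)^{-1}:z\in Q\}$ is bounded in $A$ for every compact $Q\subseteq\rho_A(a)$; proving that boundedness is where one must genuinely use the completeness of $A$ and the interplay between its topology and the norm of $B$, comparing locally with the $B$-convergent Neumann expansion $(z-a)^{-1}=\sum_{n\ge0}(w-z)^n(w-a)^{-(n+1)}$ of the resolvent near a point $w$ of $\rho_A(a)$. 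Everything else in this direction is formal — multiplicativity of the functional calculus and the commutation of Riemann integrals with continuous linear maps. As an alternative to the contour integral, one may instead invoke Runge's theorem to approximate $f$ uniformly on a compact neighbourhood of $sp_B(a)$ by rational functions $r_n=p_n/q_n$ with poles off $sp_B(a)$: each $q_n(a)\in A$ satisfies $0\notin sp_B(q_n(a))=q_n(sp_B(a))$, so $q_n(a)$ is invertible in $A$ by spectral invariance and $r_n(a)=p_n(a)q_n(a)^{-1}\in A$; it then remains to check that the sequence $(r_n(a))$ converges in $A$ (its limit being necessarily $f(a)$), which once more reduces to controlling the resolvent in the seminorms of $A$. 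Combining the two implications yields the lemma.
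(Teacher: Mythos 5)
First, a point of reference: the paper does not prove this lemma at all --- it is imported verbatim from Schweitzer (\cite[Lemma 1.2]{Schweitzer1}), so the only thing to compare your attempt against is Schweitzer's original argument. Your first implication (stability under the holomorphic functional calculus implies spectral invariance, via $f(z)=(\lambda-z)^{-1}$) is correct and complete. Your reduction of the converse to the continuity of $z\mapsto (z-a)^{-1}$ as an $A$-valued map, followed by an $A$-valued Riemann integral pushed into $B$ by the continuous inclusion, is also the right skeleton and matches the standard proof.

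The genuine gap is exactly the step you flag and then do not close, and the patch you sketch for it would fail. You propose to obtain boundedness of $\{(z-a)^{-1}: z\in Q\}$ in $A$ ``by comparing locally with the $B$-convergent Neumann expansion $(z-a)^{-1}=\sum_{n\ge 0}(w-z)^n(w-a)^{-(n+1)}$.'' That series converges in $B$ for $|w-z|\,\|(w-a)^{-1}\|_{B}<1$, but there is no reason whatsoever for it to converge in the Fr\'echet topology of $A$: for a fixed $b\in A$ the seminorms $\|b^n\|_m$ may grow faster than any geometric sequence (this is precisely the difference between a Banach algebra and a Fr\'echet algebra --- there is no single submultiplicative norm controlling powers), so spectral invariance gives you $(z-a)^{-1}\in A$ point by point but the Neumann series gives no control on its $A$-seminorms. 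The same objection kills the Runge-theorem variant at the end: you can place each $r_n(a)$ in $A$, but you have no mechanism to make $(r_n(a))$ Cauchy in $A$. The actual argument (this is the content of Schweitzer's lemma, and where the Fr\'echet hypothesis is really used) runs differently: spectral invariance identifies the group of units of $A$ with $A\cap G(B)$, which is \emph{open} in $A$ because $G(B)$ is open in $B$ and the inclusion is continuous; thus $A$ is a Fr\'echet $Q$-algebra, and one then invokes the (nontrivial, Baire-category/complete-metrizability based) theorem that inversion is continuous on an open group of units in a Fr\'echet algebra. That continuity of inversion, applied along the resolvent identity, is what yields the continuity of $z\mapsto(z-a)^{-1}$ into $A$; it cannot be extracted from norm estimates in $B$ alone. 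As written, your proof establishes one implication and correctly localizes, but does not prove, the analytic core of the other.
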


    The significance of property $(RD)_q$ for groups is the following proposition proved by Liao and Yu.
\begin{proposition}(\cite[Proposition 4.6]{LY1})\label{1}
Let $p\in[1,\infty]$ and $q$ its dual number. Let $\Gamma$ be a discrete group with property $(RD)_q$ with respect to a length function $l$. Then for sufficiently large $t>0$, the space $S^t_q(\Gamma)$ of elements $f\in \ell^q(\Gamma)$ such that
\begin{equation*}
\|f\|_{S^t_q}\ :=\ \|\gamma \rightarrow (1+l(\gamma))^t f(\gamma)\|_{\ell^q(\Gamma)}<\infty
\end{equation*}
is a Banach algebra for the norm $\|\cdot\|_{S^t_q}$. It is contained in $B^{p,*}_r(\Gamma)$, $B^p_r(\Gamma)$ and $B^q_r(\Gamma)$, and stable under holomorphic functional calculus in each of these three algebras.
\end{proposition}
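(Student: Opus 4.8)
I would prove the four assertions in turn, extracting the length-function estimates from property $(RD)_q$ and the last assertion from Lemma~\ref{Schweitzerlemma}.

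\smallskip
\noindent\emph{Completeness and the three inclusions.} The map $f\mapsto\big(\gamma\mapsto(1+l(\gamma))^{t}f(\gamma)\big)$ is an isometric linear isomorphism of $S^{t}_{q}(\Gamma)$ onto $\ell^{q}(\Gamma)$, so $S^{t}_{q}(\Gamma)$ is a Banach space in which $\mathbb{C}\Gamma$ is dense. For the inclusions, partition $\Gamma=\bigsqcup_{k\ge 0}A_{k}$ with $A_{0}=\{e\}$ and $A_{k}=\{\gamma\in\Gamma:k-1<l(\gamma)\le k\}$ for $k\ge 1$, and for $f\in\mathbb{C}\Gamma$ write $f=\sum_{k}f_{k}$, with $f_{k}$ the (finitely supported) restriction of $f$ to $A_{k}$. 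Since $f_{k}$ is supported in $B_{k}(e)$, property $(RD)_{q}$ gives $\|f_{k}\|_{\mathcal{B}(\ell^{q}(\Gamma))}\le\mathcal{P}(k)\|f_{k}\|_{\ell^{q}(\Gamma)}$; applying this to $f_{k}^{\ast}$ (also supported in $B_{k}(e)$, because $l(\gamma^{-1})=l(\gamma)$) and using Remark~\ref{remark2} together with $\|f_{k}^{\ast}\|_{\ell^{q}}=\|f_{k}\|_{\ell^{q}}$ yields the same bound with $\mathcal{B}(\ell^{p}(\Gamma))$ in place of $\mathcal{B}(\ell^{q}(\Gamma))$. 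For $\gamma\in A_{k}$ one has $(1+l(\gamma))^{t}\ge\max(k,1)^{t}\ge 2^{-t}(1+k)^{t}$, whence $\|f_{k}\|_{\ell^{q}}\le 2^{t}(1+k)^{-t}\|f_{k}\|_{S^{t}_{q}}$, while $\sum_{k}\|f_{k}\|_{S^{t}_{q}}^{q}=\|f\|_{S^{t}_{q}}^{q}$. Therefore, by the triangle inequality, property $(RD)_{q}$, and H\"older's inequality with exponents $q$ and $q'=p$ (the evident modification when $q=\infty$),
\[
\|f\|_{\mathcal{B}(\ell^{q}(\Gamma))}\;\le\;\sum_{k}\mathcal{P}(k)\|f_{k}\|_{\ell^{q}}\;\le\;2^{t}\Big(\sum_{k\ge 0}\big(\mathcal{P}(k)(1+k)^{-t}\big)^{q'}\Big)^{1/q'}\|f\|_{S^{t}_{q}}\;=:\;C_{t}\,\|f\|_{S^{t}_{q}},
\]
and likewise for $\mathcal{B}(\ell^{p}(\Gamma))$; here $C_{t}<\infty$ as soon as $t>\deg\mathcal{P}+1/q'$. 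By density the identity on $\mathbb{C}\Gamma$ extends to contractive (after rescaling) homomorphisms $S^{t}_{q}(\Gamma)\to B^{q}_{r}(\Gamma)$, $S^{t}_{q}(\Gamma)\to B^{p}_{r}(\Gamma)$ and, taking the larger of the two norms, $S^{t}_{q}(\Gamma)\to B^{p,\ast}_{r}(\Gamma)$; each is injective since $S^{t}_{q}(\Gamma)\subseteq\ell^{q}(\Gamma)$ as function spaces and $\lambda(f)=0$ forces $f=0$.

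\smallskip
\noindent\emph{Banach-algebra structure.} Fix $t>\deg\mathcal{P}+1/q'$. For $f,g\in S^{t}_{q}(\Gamma)$ and $\gamma\in\Gamma$, the inequalities $1+l(\alpha\beta)\le 1+l(\alpha)+l(\beta)$ and $(x+y)^{t}\le 2^{t}(x^{t}+y^{t})$ give the pointwise ``Leibniz'' bound
\[
(1+l(\gamma))^{t}\,\big|(f\ast g)(\gamma)\big|\;\le\;2^{t}\Big[\big((w|f|)\ast|g|\big)(\gamma)+\big(|f|\ast(w|g|)\big)(\gamma)\Big],\qquad (wh)(\gamma):=(1+l(\gamma))^{t}h(\gamma),
\]
$|h|$ denoting the pointwise modulus. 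Taking $\ell^{q}$-norms, the second term equals $\|\lambda(|f|)(w|g|)\|_{\ell^{q}}\le\|\lambda(|f|)\|_{\mathcal{B}(\ell^{q}(\Gamma))}\|w|g|\|_{\ell^{q}}\le C_{t}\|f\|_{S^{t}_{q}}\|g\|_{S^{t}_{q}}$ by the inclusion just proved (applied to $|f|$, for which $\||f|\|_{S^{t}_{q}}=\|f\|_{S^{t}_{q}}$), and for the first, using $(h_{1}\ast h_{2})^{\ast}=h_{2}^{\ast}\ast h_{1}^{\ast}$, the $\ell^{q}$-isometry of $h\mapsto h^{\ast}$, and Remark~\ref{remark2},
\[
\big\|(w|f|)\ast|g|\big\|_{\ell^{q}}=\big\|\lambda(|g|^{\ast})\big((w|f|)^{\ast}\big)\big\|_{\ell^{q}}\le\|\lambda(|g|)\|_{\mathcal{B}(\ell^{p}(\Gamma))}\,\|w|f|\|_{\ell^{q}}\le C_{t}\|f\|_{S^{t}_{q}}\|g\|_{S^{t}_{q}}.
\]
Hence $\|f\ast g\|_{S^{t}_{q}}\le 2^{t+1}C_{t}\|f\|_{S^{t}_{q}}\|g\|_{S^{t}_{q}}$; in particular $f\ast g\in S^{t}_{q}(\Gamma)$, and after rescaling the norm $S^{t}_{q}(\Gamma)$ is a unital Banach algebra (with unit the identity of $\mathbb{C}\Gamma$) lying densely in each of $B^{p,\ast}_{r}(\Gamma)$, $B^{p}_{r}(\Gamma)$, $B^{q}_{r}(\Gamma)$.

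\smallskip
\noindent\emph{Stability under holomorphic functional calculus, and the main difficulty.} Since $S^{t}_{q}(\Gamma)$ is a Banach, hence Fr\'echet, subalgebra containing the common unit of the three algebras, Lemma~\ref{Schweitzerlemma} reduces the assertion to spectral invariance: for $B\in\{B^{p,\ast}_{r}(\Gamma),B^{p}_{r}(\Gamma),B^{q}_{r}(\Gamma)\}$ and $a\in S^{t}_{q}(\Gamma)$ one needs $sp_{S^{t}_{q}(\Gamma)}(a)=sp_{B}(a)$, and as $sp_{B}(a)\subseteq sp_{S^{t}_{q}(\Gamma)}(a)$ is automatic, this is the statement that $a\in S^{t}_{q}(\Gamma)$ invertible in $B$ implies $a^{-1}\in S^{t}_{q}(\Gamma)$. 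Equivalently, because $S^{t}_{q}(\Gamma)$ is a Banach algebra, the bounded injective left-multiplication operator $L_{a}\colon S^{t}_{q}(\Gamma)\to S^{t}_{q}(\Gamma)$ should be surjective: if $c\in S^{t}_{q}(\Gamma)$ satisfies $a\ast c$ equal to the unit, then $c=(a^{-1}\ast a)\ast c=a^{-1}\ast(a\ast c)=a^{-1}$, so $a^{-1}\in S^{t}_{q}(\Gamma)$. I expect this inverse-closedness to be the main obstacle. Writing $b=a^{-1}\in B\subseteq\ell^{q}(\Gamma)$, the natural move is to estimate $\|(1+l)^{t}b\|_{\ell^{q}}$ starting from $b=b\ast a\ast b$, distributing the weight over the three factors as in the Leibniz bound and applying the $(RD)_{q}$-estimates; but the terms in which the weight falls on an outer factor $b$ only return control in terms of weighted norms of $b$ itself — a genuine loss of decay — so one must close the estimate by a bootstrap in the order of the weight (or by a perturbation argument near the invertibles of $B$). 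This is precisely what is carried out in \cite[Proposition~4.6]{LY1}, adapting to the $\ell^{q}$-setting the scheme of Jolissaint's argument \cite{Jolissaint1} for $q=2$, and I would either reproduce that argument here or content myself with the reference.
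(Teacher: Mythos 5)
The paper itself offers no proof of this proposition: it is quoted from Liao and Yu and used as a black box, so there is no in-paper argument to compare yours against. Measured on its own terms, your proposal correctly proves everything except the one assertion the paper actually needs. The completeness of $S^t_q(\Gamma)$, the three continuous inclusions (via the partition of $\Gamma$ into the annuli $A_k$, the $(RD)_q$ bound on each piece, the duality relation $\|f^{\ast}\|_{\mathcal{B}(\ell^p(\Gamma))}=\|f\|_{\mathcal{B}(\ell^q(\Gamma))}$ of Remark~\ref{remark2} to get the $\ell^p$ estimate, and H\"older's inequality to sum the pieces), and the submultiplicativity of $\|\cdot\|_{S^t_q}$ (via the Leibniz-type splitting of the weight $(1+l)^t$ over a product) are all standard and your execution checks out, modulo harmless points such as replacing $\mathcal{P}$ by a dominating polynomial with positive values and justifying pointwise convergence of $f\ast g$ by monotone convergence applied to $|f|\ast|g|$.

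The genuine gap is the final assertion, stability under holomorphic functional calculus, which via Lemma~\ref{Schweitzerlemma} you correctly reduce to inverse-closedness of $S^t_q(\Gamma)$ in each of the three completions. This is the only part of the proposition that the paper uses downstream (it is what makes Corollary~\ref{samespectrum}, and hence Theorem~\ref{maintheorem}, work), and it is exactly the part you do not prove: as you yourself observe, distributing the weight over $b=b\ast a\ast b$ only returns weighted norms of $b$ and the estimate does not close. Closing it requires an additional idea --- in the classical $q=2$ case of Jolissaint \cite{Jolissaint1} and Connes--Moscovici \cite{ConnesMoscovici} one proves an interpolation-type inequality bounding the weighted norm of $f$ by a product of a power of its operator norm and a power of a higher-order weighted norm, which controls the growth of $\|a^n\|_{S^t_q}$ in terms of the spectral radius in the operator algebra and makes the Neumann series for the resolvent converge in $S^t_q$; Liao and Yu adapt such a scheme to general $q$ in \cite[Proposition 4.6]{LY1}. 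Deferring to that reference is exactly what the paper does, so your proposal is no worse off than the text, but as a self-contained proof it is incomplete at its decisive step.
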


    Combining the above proposition with the Lemma \ref{Schweitzerlemma}, we have the following corollary.
\begin{corollary}\label{samespectrum}
    Let $p,q$ be as above, $\Gamma$ be a discrete group with property $(RD)_q$, then for any $f\in \mathbb{C}\Gamma$, we have
        $$sp_{B^{p,\ast}_r(\Gamma)}(f)=sp_{B^{p}_r(\Gamma)}(f)=sp_{B^{q}_r(\Gamma)}(f)=sp_{S^t_q(\Gamma)}(f)$$
\end{corollary}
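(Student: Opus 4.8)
The plan is to deduce the statement formally from Proposition \ref{1} and Schweitzer's Lemma \ref{Schweitzerlemma}. First I would fix a single $t>0$ large enough that the conclusion of Proposition \ref{1} holds, so that $S^t_q(\Gamma)$ is a Banach algebra which embeds, via continuous unital homomorphisms, as a subalgebra of each of $B^{p,\ast}_r(\Gamma)$, $B^p_r(\Gamma)$ and $B^q_r(\Gamma)$, and is stable under holomorphic functional calculus in each of the three. I would also record the elementary observation that every finitely supported function has finite $S^t_q$-norm, so that $\mathbb{C}\Gamma\subseteq S^t_q(\Gamma)$; in particular $S^t_q(\Gamma)$ contains the common unit of the three larger algebras.

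Next I would apply Lemma \ref{Schweitzerlemma} three times, taking $A=S^t_q(\Gamma)$ — a Banach, hence Fr\'echet, subalgebra containing the unit — and letting $B$ run over $B^{p,\ast}_r(\Gamma)$, then $B^p_r(\Gamma)$, then $B^q_r(\Gamma)$. In each case the stability under holomorphic functional calculus supplied by Proposition \ref{1} yields spectral invariance, i.e. $sp_{S^t_q(\Gamma)}(a)=sp_{B^{p,\ast}_r(\Gamma)}(a)=sp_{B^p_r(\Gamma)}(a)=sp_{B^q_r(\Gamma)}(a)$ for every $a\in S^t_q(\Gamma)$. Since $\mathbb{C}\Gamma\subseteq S^t_q(\Gamma)$, this chain of equalities applies in particular to every $f\in\mathbb{C}\Gamma$, which is exactly the asserted statement.

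I do not anticipate a genuine obstacle here: the argument is purely a matter of assembling two cited results, and the only hypotheses that need checking — that $S^t_q(\Gamma)$ is a Fr\'echet subalgebra containing the unit, and that the inclusions into the three larger algebras are continuous — are all contained in Proposition \ref{1}. The single point requiring mild care is to choose one exponent $t$ that works simultaneously for all three target algebras; since Proposition \ref{1} furnishes such a $t$, no further work is needed.
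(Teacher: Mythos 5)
Your argument is correct and is precisely the route the paper takes: the paper derives the corollary by combining Proposition \ref{1} (stability of $S^t_q(\Gamma)$ under holomorphic functional calculus in each of the three algebras) with Schweitzer's Lemma \ref{Schweitzerlemma} to upgrade that stability to spectral invariance, then restricts to $\mathbb{C}\Gamma\subseteq S^t_q(\Gamma)$. Your added checks (a single $t$ working for all three algebras, the unit lying in $S^t_q(\Gamma)$) are exactly the right points of care and are supplied by Proposition \ref{1}.
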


    Now, we are ready to state and prove our main theorem.
\begin{theorem}\label{maintheorem}
    Let $p\in [1,\infty]$, $q$ be its dual number and $\Gamma$ be a discrete group. Assume $\Gamma$ has property $(RD)_q$, if $C^{\ast}_r(\Gamma)$ has no nontrivial idempotents, then both $B^p_r(\Gamma)$ and $B^q_r(\Gamma)$ also have no nontrivial idempotents.
\end{theorem}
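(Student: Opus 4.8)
The plan is to combine the three ingredients already assembled: the spectral characterization of the absence of nontrivial idempotents (Lemma \ref{connectedspectrum}), the spectral invariance coming from property $(RD)_q$ (Corollary \ref{samespectrum}), and the transfer of the idempotent-free property from $C^*_r(\Gamma)$ to $B^{p,*}_r(\Gamma)$ (Corollary \ref{Cimplyp}). The key observation is that $\mathbb{C}\Gamma$ is a dense subalgebra of each of $B^p_r(\Gamma)$, $B^q_r(\Gamma)$, $B^{p,*}_r(\Gamma)$ and $C^*_r(\Gamma)$, so Lemma \ref{connectedspectrum} may be applied with the common dense set $F = \mathbb{C}\Gamma$ in all of them simultaneously.

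First I would note that, by hypothesis, $C^*_r(\Gamma)$ has no nontrivial idempotents, so by Corollary \ref{Cimplyp} neither does $B^{p,*}_r(\Gamma)$. Applying the forward direction of Lemma \ref{connectedspectrum} to $A = B^{p,*}_r(\Gamma)$ with $F = \mathbb{C}\Gamma$, we conclude that $sp_{B^{p,*}_r(\Gamma)}(f)$ is connected for every $f \in \mathbb{C}\Gamma$. Now invoke Corollary \ref{samespectrum}: since $\Gamma$ has property $(RD)_q$, for every $f \in \mathbb{C}\Gamma$ we have
$$sp_{B^{p,*}_r(\Gamma)}(f) = sp_{B^p_r(\Gamma)}(f) = sp_{B^q_r(\Gamma)}(f).$$
Hence $sp_{B^p_r(\Gamma)}(f)$ and $sp_{B^q_r(\Gamma)}(f)$ are connected for every $f \in \mathbb{C}\Gamma$. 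Finally, apply the reverse direction of Lemma \ref{connectedspectrum} to $A = B^p_r(\Gamma)$ (resp. $A = B^q_r(\Gamma)$) with the dense subset $F = \mathbb{C}\Gamma$: connectedness of the spectra of all elements of $F$ forces $B^p_r(\Gamma)$ (resp. $B^q_r(\Gamma)$) to have no nontrivial idempotents.

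The only points requiring a little care are bookkeeping rather than substance: one must check that $\mathbb{C}\Gamma$ genuinely embeds as a dense subalgebra in each algebra (immediate from Definition \ref{deflpalg} and the fact that the relevant completions are taken of $\mathbb{C}\Gamma$), and that Corollary \ref{samespectrum} applies to every element of $\mathbb{C}\Gamma$ — which is exactly its statement. Strictly speaking the ``some (any)'' clause of Lemma \ref{connectedspectrum} already packages the equivalence we need, so no additional argument is required to pass between ``some dense $F$'' and ``$F = \mathbb{C}\Gamma$''. I do not anticipate a genuine obstacle here; the main work has been front-loaded into Lemma \ref{connectedspectrum}, Corollary \ref{Cimplyp}, and Corollary \ref{samespectrum}, and the proof of the theorem is the short chain that links them. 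The one place to be attentive is the degenerate endpoints $p \in \{1, \infty\}$, where $q$ is $\infty$ or $1$ and several of these algebras coincide with $\ell^1(\Gamma)$ (Remark \ref{remark2}); there the statement is either trivial or reduces to a previously known case, but it costs nothing to remark that the argument above goes through verbatim since every group has property $(RD)_1$.
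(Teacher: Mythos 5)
Your proposal is correct and follows exactly the same chain as the paper's own proof: Corollary \ref{Cimplyp} to pass from $C^*_r(\Gamma)$ to $B^{p,*}_r(\Gamma)$, Lemma \ref{connectedspectrum} applied with the dense subset $\mathbb{C}\Gamma$ to translate into connectedness of spectra, Corollary \ref{samespectrum} to transfer the spectra to $B^p_r(\Gamma)$ and $B^q_r(\Gamma)$, and Lemma \ref{connectedspectrum} again to conclude. The extra remarks about density and the endpoints $p\in\{1,\infty\}$ are harmless bookkeeping and do not change the argument.
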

\begin{proof}
    By the Corollary \ref{Cimplyp}, we have that $B^{p,\ast}_r(\Gamma)$ has no nontrivial idempotents, thus by the Lemma \ref{connectedspectrum}, the spectrum $sp_{B^{p,\ast}_r(\Gamma)}(f)$ is connected for any $f\in \mathbb{C}\Gamma$. If $\Gamma$ has property $(RD)_q$, then by the Corollary \ref{samespectrum}, we have $sp_{B^{p}_r(\Gamma)}(f)=sp_{B^{q}_r(\Gamma)}(f)=sp_{B^{p,\ast}_r(\Gamma)}(f)$ which are all connected for any $f\in \mathbb{C}\Gamma$. Thus, by the Lemma \ref{connectedspectrum} again, we obtain that both $B^p_r(\Gamma)$ and $B^q_r(\Gamma)$ have no nontrivial idempotents.
\end{proof}

    Combining the above theorem with the Theorem \ref{RDqinterpolation} and the fact that every group has property $(RD)_1$, we have the following corollary.
\begin{corollary}\label{RDidempotent}
    Let $\Gamma$ be a discrete group. Assume $\Gamma$ has property $(RD)$, if $C^{\ast}_r(\Gamma)$ has no nontrivial idempotents, then for any $p\in [1,\infty]$, $B^p_r(\Gamma)$ also has no nontrivial idempotents. 
\end{corollary}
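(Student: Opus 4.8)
\emph{Proof proposal.} The plan is to deduce the statement directly from Theorem~\ref{maintheorem}, using Theorem~\ref{RDqinterpolation} to manufacture the required instances of property $(RD)_q$, and then to dispose of the exponents $p\in\{1,2,\infty\}$ by hand. Fix $p\in[1,\infty]$ and let $q$ denote its dual number; since the pair $\{p,q\}$ is symmetric, I would prove the conclusion for both members of this pair simultaneously.

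First I would reduce to the case where the exponent fed into property $(RD)$ lies in the open interval $(1,2)$. Set $q_0:=\min\{p,q\}$, so that $q_0\le 2$ and $\{p,q\}$ is precisely the dual pair $\{q_0,p_0\}$, where $p_0$ is the dual number of $q_0$. If $q_0\in(1,2)$, then property $(RD)$, which is by definition property $(RD)_2$, implies property $(RD)_{q_0}$ by Theorem~\ref{RDqinterpolation}. Applying Theorem~\ref{maintheorem} with $q_0$ in the role of ``$q$'' and $p_0$ in the role of ``$p$'', and invoking the hypothesis that $C^{\ast}_r(\Gamma)$ has no nontrivial idempotents, I conclude that both $B^{p_0}_r(\Gamma)$ and $B^{q_0}_r(\Gamma)$ have no nontrivial idempotents; since $\{p_0,q_0\}=\{p,q\}$, in particular $B^p_r(\Gamma)$ has none. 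This already covers every $p\in(1,\infty)$ with $p\ne 2$.

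It then remains to treat $p\in\{1,2,\infty\}$, where $q_0$ equals $1$ or $2$ rather than a point of $(1,2)$. For $p=2$ we have $B^2_r(\Gamma)=C^{\ast}_r(\Gamma)$ by Remark~\ref{remark2}, so there is nothing to prove. For $p=1$ and $p=\infty$ I would invoke the fact recorded after Definition~\ref{defRD} that every discrete group has property $(RD)_1$; applying Theorem~\ref{maintheorem} with $q=1$ and its dual $p=\infty$ then yields at once that both $B^1_r(\Gamma)$ and $B^\infty_r(\Gamma)$ have no nontrivial idempotents (consistently with the identifications $B^1_r(\Gamma)=B^\infty_r(\Gamma)=\ell^1(\Gamma)$ from Remark~\ref{remark2}). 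Combining these observations settles all $p\in[1,\infty]$.

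There is no genuinely hard step here: the substance is entirely contained in Theorem~\ref{maintheorem} and Theorem~\ref{RDqinterpolation}. The only point requiring care is the bookkeeping of dual exponents — ensuring that when $p>2$ one feeds the smaller exponent $q=\min\{p,q\}$ into property $(RD)$ and reads the conclusion off the ``$B^q_r$'' half of Theorem~\ref{maintheorem} — together with the observation that Theorem~\ref{RDqinterpolation} only produces $(RD)_{q'}$ for $q'$ in the \emph{open} interval $(1,2)$, so the endpoint $q'=1$ must be handled via the trivial remark that every group has property $(RD)_1$, and the exponent $p=2$ via the identity $B^2_r(\Gamma)=C^{\ast}_r(\Gamma)$.
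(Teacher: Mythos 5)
Your proposal is correct and follows exactly the route the paper intends: the paper's ``proof'' is the single sentence preceding the corollary, which combines Theorem~\ref{maintheorem} with Theorem~\ref{RDqinterpolation} and the fact that every group has property $(RD)_1$, and your careful bookkeeping of dual exponents and the endpoint cases $p\in\{1,2,\infty\}$ is precisely what that sentence leaves implicit.
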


\begin{example}\label{answertoPhillips} 
    We apply the main theorem to free groups $F_n$ for any positive integer $n$. The free group $F_n$ has property $(RD)$ was proved by Haagerup (cf. \cite{Haagerup1978}). And the reduced group $C^{\ast}$-algebra $C^{\ast}_r(F_n)$ has no nontrivial idempotents was proved by Pimsner and Voiculescu (cf. \cite{PV1}). Thus by the Corollary \ref{RDidempotent}, $B^p_r(F_n)$ has no nontrivial idempotents for any $p\in [1,\infty]$. This answer the Question \ref{Phillipsquestion} raised by Phillips (cf. \cite[part of problem 9.3]{Phillips1}). 
\end{example}

\begin{example}\label{hyperbolicgroupcase}
    In this example, we consider torsion free hyperbolic groups $\Gamma$. Hyperbolic groups have property $(RD)$ was proved by Jolissaint (cf. \cite{Jolissaint1}) and de la Harpe (cf. \cite{Harpe1988}). There are at least two different ways to prove that $C^{\ast}_r(\Gamma)$ has no nontrivial idempotents, one way is as a corollary of the Baum-Connes conjecture for hyperbolic groups which proved by Lafforgue (cf. \cite{Lafforgue2002}) and Mineyev, Yu (cf. \cite{MineyevYu}), another way is due to Puschnigg by using of local cyclic homology (cf. \cite{Puschnigg2002}). Thus by the Corollary \ref{RDidempotent}, $B^p_r(\Gamma)$ has no nontrivial idempotents for any $p\in [1,\infty]$.   
\end{example}

\begin{example}\label{polySL}
    For a torsion free discrete group $\Gamma$, if $\Gamma$ satisfies the Baum-Connes conjecture (actually, surjectivity of the assembly map is sufficient), then the reduced group $C^{\ast}$-algebra $C^{\ast}_r(\Gamma)$ has no nontrivial idempotents (cf. \cite{BCH1}). Thus by the Corollary \ref{RDidempotent}, for any $p\in [1,\infty]$, $B^p_r(\Gamma)$ has no nontrivial idempotents for every torsion free discrete group $\Gamma$ which has property $(RD)$ and satisfies the Baum-Connes conjecture. Apart from hyperbolic groups, such groups $\Gamma$ can also be finitely generated, torsion free groups with polynomial growth (cf. \cite{Jolissaint1}\cite{HK1}) and torsion free cocompact lattices of $SL(3,\mathbb{R})$ (cf. \cite{LafforgueRD}\cite{Lafforgue2002}).  
\end{example}

\bibliographystyle{plain}
\bibliography{idempotentsref}

\begin{thebibliography}{10}

\bibitem{BCH1}
Paul Baum, Alain Connes, and Nigel Higson.
\newblock Classifying space for proper actions and {$K$}-theory of group
  {$C^\ast$}-algebras.
\newblock In {\em {$C^\ast$}-algebras: 1943--1993 ({S}an {A}ntonio, {TX},
  1993)}, volume 167 of {\em Contemp. Math.}, pages 240--291. Amer. Math. Soc.,
  Providence, RI, 1994.

\bibitem{CF1}
Joel~M. Cohen and Alessandro Fig\`a-Talamanca.
\newblock Idempotents in the reduced {$C^*$}-algebra of a free group.
\newblock {\em Proc. Amer. Math. Soc.}, 103(3):779--782, 1988.

\bibitem{Connes1}
Alain Connes.
\newblock Noncommutative differential geometry.
\newblock {\em Inst. Hautes \'{E}tudes Sci. Publ. Math.}, (62):257--360, 1985.

\bibitem{ConnesMoscovici}
Alain Connes and Henri Moscovici.
\newblock Cyclic cohomology, the {N}ovikov conjecture and hyperbolic groups.
\newblock {\em Topology}, 29(3):345--388, 1990.

\bibitem{Harpe1988}
Pierre de~la Harpe.
\newblock Groupes hyperboliques, alg\`ebres d'op\'{e}rateurs et un
  th\'{e}or\`eme de {J}olissaint.
\newblock {\em C. R. Acad. Sci. Paris S\'{e}r. I Math.}, 307(14):771--774,
  1988.

\bibitem{Haagerup1978}
Uffe Haagerup.
\newblock An example of a nonnuclear {$C^{\ast} $}-algebra, which has the
  metric approximation property.
\newblock {\em Invent. Math.}, 50(3):279--293, 1978/79.

\bibitem{HK1}
Nigel Higson and Gennadi Kasparov.
\newblock {$E$}-theory and {$KK$}-theory for groups which act properly and
  isometrically on {H}ilbert space.
\newblock {\em Invent. Math.}, 144(1):23--74, 2001.

\bibitem{Jolissaint1}
Paul Jolissaint.
\newblock Rapidly decreasing functions in reduced {$C^*$}-algebras of groups.
\newblock {\em Trans. Amer. Math. Soc.}, 317(1):167--196, 1990.

\bibitem{LafforgueRD}
Vincent Lafforgue.
\newblock A proof of property ({RD}) for cocompact lattices of {${\rm
  SL}(3,\bold R)$} and {${\rm SL}(3,\bold C)$}.
\newblock {\em J. Lie Theory}, 10(2):255--267, 2000.

\bibitem{Lafforgue2002}
Vincent Lafforgue.
\newblock {$K$}-th\'{e}orie bivariante pour les alg\`ebres de {B}anach et
  conjecture de {B}aum-{C}onnes.
\newblock {\em Invent. Math.}, 149(1):1--95, 2002.

\bibitem{LY1}
Benben Liao and Guoliang Yu.
\newblock {$K$}-theory of group {Banach} algebras and {Banach} property {RD},
  2017.
\newblock arXiv:1708.01982.

\bibitem{MineyevYu}
Igor Mineyev and Guoliang Yu.
\newblock The {B}aum-{C}onnes conjecture for hyperbolic groups.
\newblock {\em Invent. Math.}, 149(1):97--122, 2002.

\bibitem{Phillips1}
N.~C. Phillips.
\newblock Open problems related to operator algebras on {$L^p$} spaces.
\newblock https://tinyurl.com/phillips-pdf.

\bibitem{PV1}
M.~Pimsner and D.~Voiculescu.
\newblock {$K$}-groups of reduced crossed products by free groups.
\newblock {\em J. Operator Theory}, 8(1):131--156, 1982.

\bibitem{Puschnigg2002}
Michael Puschnigg.
\newblock The {K}adison-{K}aplansky conjecture for word-hyperbolic groups.
\newblock {\em Invent. Math.}, 149(1):153--194, 2002.

\bibitem{Schweitzer1}
Larry~B. Schweitzer.
\newblock A short proof that {$M_n(A)$} is local if {$A$} is local and
  {F}r\'{e}chet.
\newblock {\em Internat. J. Math.}, 3(4):581--589, 1992.

\end{thebibliography}

\end{document}